\newtheorem{theorem}{Theorem}[section]
\newtheorem{lem}[theorem]{Lemma}
\newtheorem{conj}[theorem]{Conjecture}
\theoremstyle{definition}
\theoremstyle{remark}
\numberwithin{equation}{section}
\newcommand{\ds}{\displaystyle}
\newcommand{\pochhammer}[2]{\left(#1\right)_{#2}}
\crefname{equation}{}{Equations}
	\title{Taylor series for generalized Lambert W functions}
	\author{Paul Castle}
\begin{document}

		\maketitle
		
		\begin{abstract}
			The Lambert W function gives the solutions of a simple exponential polynomial. The generalized Lambert W function was defined in \cite{mezHo2017generalization} and has found applications in delay differential equations and physics. In this article we describe an even more general function, the inverse of a product of powers of linear functions and one exponential term. We show that the coefficients of the Taylor series for these functions can be described by multivariable hypergeometric functions of the parameters. We also present a surprising conjecture for the radius of convergence of the Taylor series, with a rough proof. 
		\end{abstract}
		
		\section{The generalized Lambert W function}
		
		The Lambert W function is the multivalued inverse of the function $z e^z$, studied by Lambert and Euler \cite{Corless1996LambertW}. The broadest generalization of this function was first described by Scott et al. in \cite{scott2006general} and analysed further by Mez\"{o} and Baricz in \cite{mezHo2017generalization}. These functions have applications in delay differential equations, eigenvalue problems in quantum mechanics, the $n$-body problem in general relativity, and investigations of Bose-Fermi mixtures \cite{mezHo2016some}. In \cite{mezHo2017generalization}, the generalized Lambert W function 
		$$W\left( \begin{array}{l l l l} 
		t_1	&	\ldots	&	t_n\\
		s_1	&	\ldots	&	s_m
		\end{array}; x \right)$$
		is defined as the (generally multivalued) inverse of the function 
		$$\frac{(x - t_1)(x - t_2) \ldots (x - t_n)}{(x - s_1)(x - s_2)\ldots (x-s_m)} e^x.$$
		Mez\"{o} and Baricz in \cite{mezHo2017generalization} calculate the Taylor series for two cases:
		$W\left(\begin{array}{l} 
		t\\	s
		\end{array}; a \right)$, with the coefficients in terms of Laguerre polynomials, and $W\left(\begin{array}{l l} 
		t_1 & t_2\\	-
		\end{array}; a \right)$, with the coefficients in terms of Bessel polynomials. Both Laguerre and Bessel polynomials can be defined in terms of the generalized hypergeometric function ${}_2F_0$. They also find the radius of convergence for the first function. Calculating the Taylor series for the general case, and its radius of convergence, is proposed as a further research direction in \cite{mezHo2017generalization}.

		In this paper we investigate the Taylor series of an even more general function. Consider the complex function $f(z)$ defined by the principle value of 
		$$z (z - t_1)^{p_1}(z - t_2)^{p_2} \ldots (z - t_n)^{p_m} e^z,$$
		where the $t_i$ and $p_i$ can be any nonzero complex numbers. The extra $z$ is used for convenience when taking the Taylor series. We denote the inverse of this function by 
		$$W^{(p)}\left( \begin{array}{l l l l} 
		t_1	&	\ldots	&	t_m\\
		p_1	&	\ldots	&	p_m
		\end{array}; x \right).$$
		Note that $f(z)$ may have very complicated partial inverses in the complex plane if the $p_i$ are irrational. However the only value of $z$ for which $f(z) = 0$ is $z = 0$. If the extra $z$ is replaced with the more general $(z - t_0)^{p_0}$, the function can be put into this form using the substitution $w = \frac{z - t_0}{p_0}$. Then we have:
		\begin{align*}
		y	&= (z - t_0)^{p_0}(z - t_1)^{p_1}(z - t_2)^{p_2} \ldots (z - t_n)^{p_m} e^z\\
		\frac{1}{p_0} y^{1/p_0}	&= \frac{1}{p_0} z' (z' + t_0 - t_1)^{\frac{p_1}{p_0}} \ldots (z' + t_0 - t_n)^{\frac{p_m}{p_0}} e^{\frac{z'}{p_0} + \frac{t_0}{p_0}},\\
		p_0^{\frac{1}{p_0} \sum p_i } e^{-  \frac{t_0}{p_0}}y^{1/p_0}	&= w \left(w - \frac{t_1 - t_0}{p_0} \right)^{\frac{p_1}{p_0}} \ldots \left(w - \frac{t_m - t_0}{p_0} \right)^{\frac{p_m}{p_0}} e^{\frac{w}{p_0}},
		\end{align*}
		which can be solved using the function $W^{(p)}$. If the $p_i$ are all $1$ or $-1$ then $f(x)$ can be expressed as a rational function multiplied by $e^x$. So $W^{(p)}$ is strictly more general than $W$.  
		
		\begin{theorem}
			Let $f(z) = z (z - t_1)^{p_1} \ldots (z - t_m)^{p_m} e^z$. Then the Taylor series for the inverse of $f$ around $0$ is 
			\begin{equation} \label{series}
			f^{-1}(z) = \sum_{n=1}^\infty \frac{(-n)^{n-1}}{n!} (-t_1)^{-n p_1} \ldots (-t_m)^{-n p_m} F_n z^n,
			\end{equation}
			where
			\begin{align*}
			F_n &= {}^{1+1}F_{0+0} \left( \begin{array}{l l l}
			1 - n	&; n p_1, \ldots, n p_m	&; \\
			&;						&;
			\end{array} \frac{1}{n t_1}, \ldots, \frac{1}{n t_m} \right)\\
			&= \sum_{k_1, \ldots, k_m =0}^\infty \frac{(1-n)_{k_1 + \ldots + k_m} (np_1)_{k_1} \ldots (np_m)_{k_m}}{k_1! \ldots k_m! (nt_1)^{k_1} \ldots (nt_m)^{k_m}}.
			\end{align*}
			is a generalized Kamp\'{e} de F\'{e}riet function, a limiting case of the Lauricella function. This function always reduces to a polynomial of degree $n$. 
		\end{theorem}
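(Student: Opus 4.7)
The plan is to apply Lagrange inversion. Writing $f(z) = z\, g(z)$ with $g(z) = (z-t_1)^{p_1}\cdots(z-t_m)^{p_m} e^z$, we have $g(0) = (-t_1)^{p_1}\cdots(-t_m)^{p_m} \neq 0$ on the chosen principal branches, so $f$ is locally invertible at $0$. The standard Lagrange inversion formula then gives
\[
[z^n]\, f^{-1}(z) \;=\; \frac{1}{n}\, [z^{n-1}]\, g(z)^{-n}.
\]
Everything that follows is bookkeeping: compute this single coefficient and recognise it as the stated Kamp\'{e} de F\'{e}riet sum.

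Next I would expand $g(z)^{-n}$ as a multivariable power series in a neighbourhood of $0$. Factoring $(z - t_i)^{-n p_i} = (-t_i)^{-n p_i}(1 - z/t_i)^{-n p_i}$ and applying the binomial series yields
\[
(1 - z/t_i)^{-n p_i} \;=\; \sum_{k_i \geq 0} \frac{(n p_i)_{k_i}}{k_i!}\left(\frac{z}{t_i}\right)^{k_i},
\]
while $e^{-nz} = \sum_{k_0 \geq 0} (-n)^{k_0} z^{k_0}/k_0!$. Multiplying these series and collecting the coefficient of $z^{n-1}$ enforces $k_0 + k_1 + \cdots + k_m = n-1$. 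Eliminating $k_0$ in favour of $K := k_1 + \cdots + k_m$ leaves a finite sum over tuples $(k_1, \ldots, k_m)$ with $K \leq n-1$.

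The match with \eqref{series} then rests on a single Pochhammer identity, $(1-n)_K = (-1)^K (n-1)!/(n-1-K)!$, which lets one rewrite
\[
\frac{(-n)^{n-1-K}}{(n-1-K)!} \;=\; \frac{(-n)^{n-1}}{(n-1)!}\cdot\frac{(1-n)_K}{n^K}.
\]
The factor $n^K \prod_i t_i^{k_i}$ then absorbs into $\prod_i (n t_i)^{k_i}$, and the prefactors $\tfrac{1}{n} \cdot \tfrac{1}{(n-1)!}$ combine into $1/n!$; what remains is term-by-term identical to \eqref{series}. The polynomial claim is immediate from this form, since $(1-n)_K$ vanishes for $K \geq n$, so the multivariable sum $F_n$ is supported on the bounded index set $\{K \leq n-1\}$ and terminates.

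The only step with any subtlety is this final rearrangement: the factor $(-n)^{n-1-K}$ carries both a sign and a power of $n$ that must each be routed to the correct place — the sign is swallowed by $(1-n)_K$, and the power of $n$ is absorbed into $n t_i$. Aside from this sign and Pochhammer bookkeeping, nothing in the argument goes beyond Lagrange inversion and the binomial series, so I do not expect any genuine obstacle.
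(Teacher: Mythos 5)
Your proof is correct, and it takes a genuinely different route from the paper. The paper also starts from Lagrange inversion, but instead of expanding $g(z)^{-n}$ directly it first replaces $e^z$ by $(1+z/\tau)^\tau$, so that $f_\tau$ is a pure product of powers of linear factors; the coefficient extraction is then done via the generalized Leibniz rule, the resulting multinomial sum is collapsed with the generalized Chu--Vandermonde identity centered on the exponential factor (yielding a Lauricella $F_D$ with parameters depending on $\tau$), and finally the limit $\tau \to \infty$ is taken, which requires a separate lemma on locally uniform convergence of inverses. Your argument reaches the same sum by simply multiplying the binomial series for each $(1-z/t_i)^{-np_i}$ against the series for $e^{-nz}$, eliminating $k_0$, and using $(1-n)_K = (-1)^K (n-1)!/(n-1-K)!$; I checked the sign and Pochhammer bookkeeping in your final rearrangement and it is exactly right, including the absorption of $n^{-K}$ into the $(nt_i)^{k_i}$ and of $\tfrac{1}{n}\cdot\tfrac{1}{(n-1)!}$ into $\tfrac{1}{n!}$. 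Your route is shorter and avoids both the limit-interchange justification and the Chu--Vandermonde machinery; what the paper's detour buys is the explicit exhibition of $F_n$ as a $\tau\to\infty$ limit of Lauricella $F_D$ functions, which motivates calling it a "limiting case of the Lauricella function" and fits the paper's hypergeometric theme, but it is not needed to establish the formula. The only point worth making explicit in your write-up is the branch bookkeeping: since $-n$ is an integer, $\bigl((z-t_i)^{p_i}\bigr)^{-n} = (z-t_i)^{-np_i}$ on the same branch of $\log(z-t_i)$, and the factorization $(z-t_i)^{-np_i} = (-t_i)^{-np_i}(1-z/t_i)^{-np_i}$ is consistent near $z=0$; your termination argument for the polynomial claim ($(1-n)_K = 0$ for $K \geq n$) is also the right one.
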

		
		We prove this theorem in \cref{Main theorem section}. We also make some progress in finding the radius of convergence of the Taylor series about $z = 0$. We have the following conjecture:
		
		\begin{conj} \label{Radius conjecture}
			There exist integers $l_1, \ldots, l_m$ such that the radius of convergence for the series \cref{series} is
			\begin{equation} 
			R = \left| \exp\left(2\pi i \sum_j  l_j \phi_j  - 1\right) \left(1 - \sum_j \phi_j\right)^{1 - \sum_j \phi_j} \prod_j \left(\frac{t_j}{p_j + \phi_j} \right)^{p_j + \phi_j} p_j^{p_j} \phi_j^{\phi_j} \right|,
			\end{equation}			
			where $\phi$ is one of the solutions to the system of quadratic equations
			\begin{equation}
			\left(1 - \sum_j \lambda_j\right)(p_i + \lambda_i) + \lambda_i t_i = 0.
			\end{equation}
		\end{conj}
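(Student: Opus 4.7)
The plan is to use the standard principle that when $f$ is holomorphic near $0$ with $f(0)=0$ and $f'(0)\neq 0$, the radius of convergence of the principal branch of $f^{-1}$ at $0$ equals the distance from $0$ to the nearest singularity of $f^{-1}$, and such singularities are branch points occurring at critical values $f(z^*)$ with $f'(z^*)=0$. Since $f$ is itself multi-valued when some $p_j$ is non-integer, the integers $l_j$ record which sheet of $f$ is met by the analytic continuation of the principal branch of $f^{-1}$ out to the obstructing critical value.

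First I would locate the critical points. The logarithmic derivative is
\[
\frac{f'(z)}{f(z)} \;=\; 1 + \frac{1}{z} + \sum_{j=1}^{m}\frac{p_j}{z - t_j},
\]
and setting it to zero gives the critical equation. The key substitution is $\phi_j := z\,p_j/(t_j - z)$, equivalently $z(p_j + \phi_j) = \phi_j t_j$. Under this substitution the critical equation reduces to $z = -(1 - \sum_j \phi_j)$, and re-inserting into $z(p_j+\phi_j) = \phi_j t_j$ recovers exactly the quadratic system
\[
(1 - \sum_k \phi_k)(p_j + \phi_j) + \phi_j t_j = 0
\]
stated in the conjecture. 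Each critical point $z^*$ thus corresponds to a solution $\phi$, and a candidate radius is $R = |f(z^*)|$.

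Second I would evaluate $f(z^*)$ in terms of $\phi$. Setting $\sigma := 1 - \sum_j \phi_j$, so $z^* = -\sigma$ and $z^* - t_j = \sigma p_j/\phi_j$, direct substitution yields
\[
f(z^*) \;=\; -\sigma\,e^{-\sigma}\prod_{j}\Bigl(\tfrac{\sigma p_j}{\phi_j}\Bigr)^{p_j}.
\]
The remaining work is to rewrite this in the conjectured form, which uses the complex-power expressions $\sigma^\sigma$, $(t_j/(p_j+\phi_j))^{p_j+\phi_j}$, $p_j^{p_j}$ and $\phi_j^{\phi_j}$. Each such expression is defined only modulo a factor of the form $e^{2\pi i n \cdot (\text{exponent})}$ coming from the choice of logarithm branch, and the identity $t_j/(p_j+\phi_j) = -\sigma/\phi_j$ (immediate from the defining system) supplies the required algebraic substitution. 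Working everything in terms of logarithms reduces the identification to a linear equation in the log-branch integers; the accumulated mismatch, together with the rewriting of $-\sigma\,e^{-\sigma}$ as $\sigma^\sigma e^{-1}$ up to branch data, packages as $\exp(2\pi i \sum_j l_j \phi_j - 1)$ for suitable integers $l_j$, yielding the conjectured expression.

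The principal obstacle is the rigorous determination of which $l_j$ is correct, i.e., which sheet of $f$ the principal branch of $f^{-1}$ actually reaches. This is a monodromy problem on $\mathbb{C}\setminus\{0,t_1,\ldots,t_m\}$: for integer $p_j$ the ambiguity collapses, but for non-integer or complex $p_j$ different analytic-continuation paths yield different admissible $l_j$, which is why the conjecture asserts only existence of such integers. One must additionally verify that among the (at most $m+1$) solutions $\phi$ of the quadratic system the correct critical point is selected, and that no singularity at infinity or accumulation phenomenon produces a smaller obstruction than the critical-value mechanism. These are precisely the steps where full rigor breaks down, motivating the ``rough'' qualifier in the abstract.
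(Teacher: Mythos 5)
Your route is genuinely different from the paper's. The paper never looks at the function $f$ itself: it applies the root test to the coefficients of \cref{series}, derives Stirling-type asymptotics for the terms of the Kamp\'e de F\'eriet polynomial $F_n$, replaces the sum over $\mathbf{k}$ by an integral over the simplex, and extracts $\limsup_n |F_n|^{1/n}$ by the saddle point method; the quadratic system arises there as the saddle-point equation $\partial g/\partial\lambda_i=0$ for the phase function $g$, and the integers $l_j$ are inserted because adding $2\pi i n l_j\lambda_j$ to $ng$ leaves the sum unchanged (since $n\lambda_j=k_j$ is an integer) but moves the saddle points of the integral. You instead use the classical singularity picture: $R$ is the distance from $0$ to the nearest branch point of $f^{-1}$, i.e.\ the modulus of a critical value $f(z^*)$. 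Your substitution $\phi_j=zp_j/(t_j-z)$ correctly shows that the critical equation $f'/f=0$ is exactly the paper's system \cref{polynomial system} --- an identification the paper does not make and which explains conceptually why that system governs the radius --- and your reading of the $l_j$ as monodromy data is more principled than the paper's ad hoc insertion. Both arguments are heuristic and stall at dual points (selecting the dominant saddle versus selecting the reachable nearest critical value; branch bookkeeping; ruling out closer singularities over asymptotic values), so your sketch is at essentially the same level of rigor as the paper's.

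One concrete point of friction is worth recording. Carrying your computation through gives
\begin{equation*}
\left|f(z^*)\right| \;=\; \left|\,\sigma e^{-\sigma}\prod_j\Bigl(\tfrac{\sigma p_j}{\phi_j}\Bigr)^{p_j}\right|,\qquad \sigma=1-\sum_j\phi_j,
\end{equation*}
which carries the factor $|e^{z^*}|=e^{\Re(\sum_j\phi_j-1)}$, whereas the displayed conjecture carries only $e^{-1}$ together with $|e^{2\pi i\sum_j l_j\phi_j}|$, a factor that cannot supply $e^{\Re\sum_j\phi_j}$ when $\phi$ is real. Testing $m=1$, $p_1=1$, $t_1>0$ real (so $\phi^2-t_1\phi-1=0$ with $\phi>1$), your formula gives $R=\tfrac{(\phi-1)^2}{\phi}e^{\phi-1}$, and tracing the paper's own quantity $e\,|t_1|^{-1}e^{\Re g(\phi)}$ gives the same value, while the conjectured display evaluates to $\tfrac{(\phi-1)^2}{\phi}e^{-1}$. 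So your independent route actually reproduces the paper's internal saddle-point answer and exposes an apparent missing factor $e^{\Re\sum_j\phi_j}$ (traceable to the $(-et_j\lambda_j)^{\lambda_j}$ term in $e^{g}$) in the conjectured formula as printed; this is a useful cross-check that only becomes visible by comparing the two methods.
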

		Using an asymptotic for $F_n$ and the saddle point method, we give a strong argument for this conjecture in \cref{ROC section}, but not a full proof.

%

	\section{Multi-valued hypergeometric functions}
	%
	%
	%
	%
	
	This paper uses polynomials that can be expressed as multivariable hypergeometric functions \cite{exton1976multiple}. We only give the definitions here.
	
	\subsection{Lauricella functions}
	The four Lauricella hypergeometric series $F_A, F_B, F_C, F_D$ are multi-variable generalisations of the hypergeometric function ${}_2F_1$ \cite{Lauricella1893Sulle}. In this paper we are mostly concerned with the fourth Lauricella function:
	\begin{align*}
	F_D^{(m)}	&\left[a, b_1, \ldots, b_m; c; x_1, \ldots, x_m \right]\\
	&= \sum_{k_1, \ldots, k_m = 0}^{\infty} \frac{\pochhammer{a}{\sum_j k_j} \pochhammer{b_1}{k_1} \ldots \pochhammer{b_m}{k_m}}{\pochhammer{c}{\sum_j k_j}} \frac{x_1^{k_1}}{k_1!} \ldots  \frac{x_m^{k_m}}{k_m!}.
	\end{align*}
	If $a = -n$ is a negative integer, then the sum becomes $\ds \sum_{k_1 + \ldots k_m \leq n}$ and the function is a polynomial, so it converges for all values of the $x_i$. The function is not defined if $c$ is a negative integer. 
	
	\subsection{Kamp\'{e} de Feriet function}
	
	The Kamp\'{e} de Feriet function \cite{Feriet1937La} generalises the hypergeometric function to two variables in a different way to Lauricella functions:
	\begin{align*}
	{}^{p+q}F_{r+s}	&\left[\begin{array}{l l}
	a_1, \ldots, a_p: b_1, b_1'; \ldots; b_q, b_q';\\
	c_1, \ldots, c_r: d_1, d_1'; \ldots; d_s, d_s';\end{array}  x_1, x_2 \right]\\
	&= \sum_{k_1 = 0}^{\infty} \sum_{k_2 = 0}^{\infty}  \frac{\pochhammer{a_1}{k_1 + k_2} \ldots \pochhammer{a_p}{k_1 + k_2} \pochhammer{b_1}{k_1} \pochhammer{b_1'}{k_1'} \ldots \pochhammer{b_q}{k_1} \pochhammer{b_q'}{k_2'}}{\pochhammer{c_1}{k_1 + k_2} \ldots \pochhammer{c_r}{k_1 + k_2} \pochhammer{d_1}{k_1} \pochhammer{d_1'}{k_1'} \ldots \pochhammer{d_s}{k_1} \pochhammer{d_s'}{k_2'}} \frac{x_1^{k_1}}{k_1!}  \frac{x_2^{k_2}}{k_2!}.
	\end{align*}
	
	\subsection{Lauricella--Kamp\'{e} de F\'{e}riet functions}
		It seems natural to extend hypergeometric functions in both directions to create the following function of $m$ variables: 
	\begin{align*}
	&{}^{p+q}F^{(m)}_{r+s}	\left[\begin{array}{l l l}
	\textbf{a} : B;\\
	\textbf{c} : D;\end{array}  x_1, \ldots, x_m \right]\\
	&={}^{p+q}F^{(m)}_{r+s}	\left[\begin{array}{l l l}
	a_1, \ldots, a_p &: b_{11}, \ldots, b_{1m} ; \ldots; b_{q1}, \ldots, b_{qm};\\
	c_1, \ldots, c_r &: d_{11}, \ldots, d_{1m} ; \ldots; d_{s1}, \ldots, d_{sm};\end{array}  x_1, \ldots, x_m \right]\\
	&= \sum_{k_1, \ldots, k_m = 0}^{\infty}  \frac{\ds \prod_{i=1}^p \pochhammer{a_i}{\sum_j k_j} \prod_{i=1}^q \prod_{j=1}^m \pochhammer{b_{ij}}{k_j}}{\ds \prod_{i=1}^r \pochhammer{c_i}{\sum_j k_j} \prod_{i=1}^s \prod_{j=1}^m \pochhammer{c_{ij}}{k_j}}
	\frac{x_1^{k_1}}{k_1!} \ldots \frac{x_m^{k_m}}{k_m!}.
	\end{align*}
	We could not find a name for this function in the literature, but a similar function appears in \cite{exton1976multiple}. It can be expressed as a (very) special case of the Srivivasta--Daoust function. In this paper, we only need the special case 
	\begin{align*}
	F_n &= {}^{1+1}F_{0+0} \left( \begin{array}{l l l}
	1 - n	&; n p_1, \ldots, n p_m	&; \\
	&;						&;
	\end{array} \frac{1}{n t_1}, \ldots, \frac{1}{n t_m} \right)\\
	&= \sum_{k_1, \ldots, k_m =0}^\infty \frac{(1-n)_{k_1 + \ldots + k_m} (np_1)_{k_1} \ldots (np_m)_{k_m}}{k_1! \ldots k_m! (nt_1)^{k_1} \ldots (nt_m)^{k_m}}.
	\end{align*}
	This is always a polynomial of the terms $p_i$ and $\frac{1}{t_i}$ of order $n$.
	
	\section{The generalized Chu--Vandermonde identity}
	
	The generalized Chu-Vandermonde identity is the latest in a line of combinatorics identities dating back to 1303 \cite{askey1975orthogonal}. This generalisation is due to Favaro \cite{favaro2012generalized} and another proof can be found in \cite{cerquetti2010simple}.
	
	\begin{lem}[Generalized Chu-Vandermonde identity]
		Let $n, k, r$ be positive integers. Let $q_1, \ldots, q_r \in \mathbb{C}$ and $w_1, \ldots, w_r  \in \mathbb{C}$. Let $i \in \{1, \ldots, r\}$. Then if $q_i \neq -(k-1), \ldots, -1, 0$ and $w_i \neq 0$, then 	
		
		\begin{align*}
		S	&= \sum_{\Sigma_j k_j = k} \binom{k}{k_1, \ldots, k_r} \prod_{j=1}^r (q_j)_{k_j} w_j^{k_j}\\
			&= w_i^k \left(\sum_j q_j \right)_k F_D^{(r-1)} \left(-k, \underbrace{q_1,\ldots, q_r}_{\neq i}, \sum_j q_j ; \underbrace{1 - \frac{w_1}{w_i}, \ldots, 1 - \frac{w_r}{w_i}}_{\neq i}\right).
		\end{align*}
		Note: the $\neq i$ under the brace indicates that the term with $i$ is removed. 
	\end{lem}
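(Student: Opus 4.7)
The natural approach is generating functions. Introducing the exponential generating function $G(z) = \sum_{k \geq 0} S(k) z^k/k!$, where $S(k)$ denotes the left-hand side of the identity for a given $k$, one can swap the sum over compositions of $k$ with the outer sum and apply the negative binomial series to each of the $r$ factors to obtain
\[
G(z) = \prod_{j=1}^r (1 - w_j z)^{-q_j}.
\]
So the identity reduces to the statement that $k!\,[z^k]$ of this product equals the right-hand side.

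The next step is to single out the index $i$. I would substitute $u = w_i z$, which is legitimate because $w_i \neq 0$: this pulls out a factor $w_i^k$ and reduces the task to extracting $[u^k]\prod_j\bigl(1 - (w_j/w_i)u\bigr)^{-q_j}$. The key algebraic observation is
\[
\frac{1 - (w_j/w_i) u}{1 - u} = 1 + \left(1 - \frac{w_j}{w_i}\right)\frac{u}{1 - u},
\]
which lets me factor the product as $(1-u)^{-\sum_j q_j}$ times $r - 1$ factors of the form $\bigl(1 + (1 - w_j/w_i)\,u/(1-u)\bigr)^{-q_j}$. A binomial expansion of each such factor produces a term proportional to $(-1)^{\ell_j}(q_j)_{\ell_j}(1 - w_j/w_i)^{\ell_j} u^{\ell_j}/\bigl(\ell_j!(1-u)^{\ell_j}\bigr)$. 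Collecting all the $(1-u)$-powers into a single $(1-u)^{-\sum_j q_j - L}$ factor with $L = \sum_{j \neq i}\ell_j$, and then expanding by the negative binomial series once more, contributes a further factor $(\sum_j q_j + L)_{k-L}/(k-L)!$ upon extracting $[u^k]$.

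To reassemble, I would multiply by $k!\,w_i^k$ and invoke the two Pochhammer identities $k!/(k-L)! = (-1)^L(-k)_L$ and $(\sum_j q_j + L)_{k-L} = (\sum_j q_j)_k/(\sum_j q_j)_L$. Since $\prod_{j \neq i}(-1)^{\ell_j} = (-1)^L$, the sign factors cancel, and the expression collapses to
\[
w_i^k \,\Bigl(\sum_{j} q_j\Bigr)_k \sum_{\{\ell_j\}_{j \neq i}} \frac{(-k)_L \prod_{j \neq i}(q_j)_{\ell_j}}{\bigl(\sum_j q_j\bigr)_L}\prod_{j \neq i}\frac{(1 - w_j/w_i)^{\ell_j}}{\ell_j!},
\]
which is precisely the Lauricella series on the right-hand side. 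The sum automatically truncates to $L \leq k$ because $(-k)_L = 0$ for $L > k$.

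The main obstacle is bookkeeping rather than substance: tracking the signs, switching correctly between rising and falling Pochhammer conventions, and noting that the variable $1 - w_i/w_i = 0$ is naturally absent so that the output is $F_D^{(r-1)}$ rather than $F_D^{(r)}$. The hypotheses $w_i \neq 0$ and $q_i \notin \{0, -1, \ldots, -(k-1)\}$ are exactly what is needed to perform the substitution $u = w_i z$ and to manipulate the Pochhammer ratio without dividing by zero. Convergence is a non-issue: because the right-hand side truncates to a polynomial of total degree $k$, the entire derivation is a finite algebraic identity for each fixed $k$, so all formal power series manipulations are legitimate.
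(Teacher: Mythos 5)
Your proof is correct, but it takes a genuinely different route from the one in the paper. The paper follows Cerquetti's argument: it massages the multinomial sum term by term using Pochhammer identities such as $(q_i)_{k_i} = (-1)^{k-k_i}(q_i)_k/(1-q_i-k)_{k-k_i}$, recognizes the resulting sum as an $F_D^{(r-1)}$ with denominator parameter $1-q_i-k$ and arguments $w_j/w_i$, and then invokes, as a black box, the Euler-type reflection formula for $F_D$ quoted from Exton to pass to the parameter $\sum_j q_j$ and the arguments $1-w_j/w_i$. You instead encode $S(k)$ in the exponential generating function $\prod_j(1-w_jz)^{-q_j}$ and carry out the Euler transformation directly at the level of power series via $1-(w_j/w_i)u=(1-u)\bigl(1+(1-w_j/w_i)\tfrac{u}{1-u}\bigr)$; in effect you reprove the $F_D$ reflection formula rather than citing it, so your argument is self-contained where the paper's is not. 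Your route also exposes something the paper's proof obscures: the hypothesis $q_i\neq 0,-1,\ldots,-(k-1)$ is needed in the paper's derivation precisely to keep $(1-q_i-k)_{k-k_i}$ nonzero, whereas your derivation never divides by anything involving $q_i$ alone --- so, contrary to your closing remark, that hypothesis is not actually used in your argument, and your proof establishes the identity without it (both proofs do tacitly require $(\sum_j q_j)_L\neq 0$ for $L\le k$, i.e.\ that the right-hand $F_D$ is defined). Since each coefficient extraction is a finite polynomial identity in the parameters, your formal-power-series manipulations are all legitimate, as you correctly note.
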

	When using this identity for a particular $i$, we will say the identity is \emph{centered on} $i$. In \cite{favaro2012generalized}, all the $q_j$ and $w_j$ are assumed to be positive, but this condition is not necessary. Indeed, the only condition is on $q_i$ and $w_i$.
	
	\begin{proof}
		We follow the proof in \cite{cerquetti2010simple}.  We use four short identities, which hold for all nonnegative integers $k_1 + \ldots + k_r = k$. The first follows from the definition of the Pochhammer symbol.
		\begin{equation} \label{short identity 1}
		\frac{k!}{k_1! \ldots k_r!} = 	\frac{k_i!}{k_1! \ldots k_r!} (-1)^{k - k_i} \, (-k)_{k - k_i}
		\end{equation}
		For all $q \neq -(k-1), \ldots, -1, 0$, we have
		\begin{equation} \label{short identity 2}
		(q)_{k_i} = \frac{(-1)^{k - k_i} \, (q)_k }{(1 - q - k)_{k - k_i}}
		\end{equation}
		For $c \neq 0$, the fourth Lauricella function satisfies (see \cite[page 216]{exton1976multiple})
		\begin{equation}\label{Lauricella reflection}
		\begin{split}
		F_D^{(m)}	&[-k, b_1, \ldots, b_m; c; x_1, \ldots x_m]\\
		&= \frac{(c- \sum_j b_j)_k}{(c)_k} F_D^{(m)}\left[-k, b_1, \ldots, b_r; 1 + \sum_j b_j - k - c; 1 - x_1, \ldots, 1 - x_m \right].
		\end{split} 	
		\end{equation}
		For all $Q \in \mathbb{C}$ and all $q \neq -(k-1), \ldots, -1, 0$:
		\begin{equation} \label{short identity 3}
		(q)_k \frac{(1 - k - Q)_k}{(1 - k - q)_k} = (Q)_k.
		\end{equation}		
		Let 
		$$S = \sum_{\sum_j k_j = k} \frac{k!}{k_1! \ldots k_r!} \prod_{j=1}^m (q_j)_{k_j} w_j^{k_j}.$$
		Using \cref{short identity 1}, we can extract the terms with an $i$:
		\begin{align*}
		S	&= \sum_{\sum_j k_j = k} \frac{(-1)^{k - k_i} (-k)_{k - k_i} k_i!}{k_1! \ldots k_r!} \prod_{j=1}^m (q_j)_{k_j} w_j^{k_j}.\\
		&= w_i^k \sum_{\sum_j k_j = k} \frac{(-1)^{k - k_i} (-k)_{k - k_i} k_i!}{k_1! \ldots k_r!} (q_i)_{k_i} \prod_{j \neq i} (q_j)_{k_j} \left(\frac{w_j}{w_r} \right)^{k_j}.
		\end{align*}
		By \cref{short identity 2}, 
		\begin{align*}
		S	&= w_i^k \sum_{\sum_j k_j = k} \frac{(-1)^{k - k_i} (-k)_{k - k_i} k_i}{k_1! \ldots k_r!} \frac{(-1)^{k - k_i} (q_i)_k }{(1 - q_i - k)_{k - k_i}} \prod_{j \neq i} (q_j)_{k_j} \left(\frac{w_j}{w_i} \right)^{k_j}\\
		&= w_i^k \sum_{\sum_j k_j = k} \frac{(-k)_{k - k_i} (q_i)_k }{(1 - q_i - k)_{k - k_i}} \prod_{j=1}^{r-1} \frac{(q_j)_{k_j}}{k_j!} \left(\frac{w_j}{w_i} \right)^{k_j} \\
		&= w_i^k (q_i)_k \sum_{\sum_j^{r-1} k_j \leq k} \frac{(-k)_{\sum_j^{r-1} k_j}}{(1 - q_i - k)_{\sum_j^{r-1} k_j}} \prod_{j=1}^{r-1} \frac{(q_j)_{k_j}}{k_j!} \left(\frac{w_j}{w_i} \right)^{k_j} \\
		&= w_i^k (q_i)_k F_D^{(r-1)}\left[-k, \underbrace{q_1, \ldots, q_r}_{\neq i}; 1 - q_i  - k; \underbrace{\frac{w_1}{w_i}, \ldots, \frac{w_r}{w_i}}_{\neq i} \right].
		\end{align*}
		Let $c = 1 - q_i - k$ and note that $1 + \sum_{j \neq i} q_i - n - c = \sum_j q_j$. Then by \cref{Lauricella reflection},
		\begin{align*}
		S	&= w_r^k (q_r)_k \frac{(1 - k - \sum_j q_j)_k}{(1 - q_r  - k )_k}\\
		&\times F_D^{(r-1)}\left[-k, \underbrace{q_1, \ldots, q_r}_{\neq i}; \sum_j q_j; \underbrace{1 - \frac{w_1}{w_i}, \ldots, 1 - \frac{w_r}{w_i}}_{\neq i} \right].
		\end{align*}
		Finally, with \cref{short identity 3}, we have
		\begin{align*}
		S	&= w_r^k \left(\sum_j q_j \right)_k F_D^{(r-1)}\left[-k, \underbrace{q_1, \ldots, q_r}_{\neq i}; \sum_j q_j; \underbrace{1 - \frac{w_1}{w_i}, \ldots, 1 - \frac{w_r}{w_i}}_{\neq i} \right].
		\end{align*}

	\end{proof}
	
	\section{Inverses using the limit definition of the exponential function}
	
	We use the limit definition of the exponential function 
	$$e^x = \lim_{n \rightarrow \infty} \left(1 + \frac{x}{n} \right)^n.$$
	This limit is locally uniform. 
	
	\begin{lem} 
		If $f_n$ is a sequence of biholomorphic functions converging to $f$ locally uniformly, then $f$ has an inverse $h$ and $f_n^{-1}$ converges to $h$ locally uniformly.	
	\end{lem}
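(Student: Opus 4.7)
The plan is to combine Weierstrass's theorem and Hurwitz's theorem to produce the inverse $h$, and then to use the argument principle (via Rouch\'{e}'s theorem) together with Montel's theorem to promote this to locally uniform convergence $f_n^{-1}\to h$.

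First, by Weierstrass's theorem, the locally uniform limit $f$ of holomorphic functions is itself holomorphic on the common domain $U$. Since each $f_n$ is biholomorphic and hence injective, Hurwitz's theorem implies that $f$ is either injective or constant on each connected component of $U$. The constant case must be ruled out either as an implicit hypothesis on $f$ or, as in the present application, by noting that the target limit $f(z) = z(z-t_1)^{p_1}\cdots(z-t_m)^{p_m}e^z$ is plainly non-constant. With $f$ injective we obtain a well-defined inverse $h:f(U)\to U$.

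For convergence I would fix $w_0 = f(z_0)$ in $f(U)$ and choose $\varepsilon>0$ small enough that $\overline{B(z_0,\varepsilon)}\subset U$ and $z_0$ is the only zero of $f(z)-w_0$ in this closed disc (possible because $f$ is non-constant). Set $\delta = \min_{|z-z_0|=\varepsilon}|f(z)-w_0|>0$. By locally uniform convergence, $|f_n-f|<\delta/2$ on $\overline{B(z_0,\varepsilon)}$ for all large $n$, so for any $w$ with $|w-w_0|<\delta/4$, Rouch\'{e}'s theorem applied to $f(z)-w$ and $f_n(z)-w$ on the circle $|z-z_0|=\varepsilon$ shows that each has exactly one zero in $B(z_0,\varepsilon)$. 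Thus $f_n^{-1}(w)$ exists and lies in $B(z_0,\varepsilon)$, giving both pointwise convergence $f_n^{-1}(w)\to h(w)$ and local boundedness of $\{f_n^{-1}\}$ near every point of $f(U)$. Montel's theorem then makes $\{f_n^{-1}\}$ a normal family, and since every subsequential limit is holomorphic and agrees with $h$ pointwise, the whole sequence converges to $h$ locally uniformly on $f(U)$.

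The main obstacle is essentially the Rouch\'{e} step: one needs the non-constancy of $f$ to guarantee that the minimum $\delta$ of $|f(z)-w_0|$ on a small circle around $z_0$ is strictly positive, and one needs the uniform error bound from locally uniform convergence on the compact disc to bring $f_n$ within this margin. Everything else is the standard normal-families packaging.
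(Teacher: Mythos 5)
Your proof is correct, but it takes a genuinely different route from the paper's. The paper argues through derivatives: it applies the inverse function theorem to write $\left(f^{-1}\right)'(y) = 1/f'(x)$, passes to the limit $1/f_n'(x) = \left(f_n^{-1}\right)'(f_n f^{-1}y)$, interchanges a limit inside $\left(f_n^{-1}\right)'$ to conclude $\left(f_n^{-1}\right)'(y) \to \left(f^{-1}\right)'(y)$, and then integrates (it also cites an external reference for a second proof). That argument is short but presupposes the existence of $f^{-1}$ -- which is part of what the lemma asserts and which the paper only verifies separately for its specific $f$ via $f'(0) \neq 0$ -- and the interchange of limits step is unjustified as written: making it rigorous would require exactly the kind of equicontinuity/normality of the family $\{f_n^{-1}\}$ that you establish. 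Your argument is the standard self-contained one: Weierstrass and Hurwitz give holomorphy and injectivity of the limit (modulo ruling out the constant case, which you correctly flag as an extra hypothesis or an observation about the specific application), Rouch\'{e} gives both pointwise convergence of $f_n^{-1}$ and local uniform boundedness, and Montel plus uniqueness of subsequential limits upgrades this to locally uniform convergence. What your approach buys is an actual proof of the existence claim and a justification of the convergence that the paper's computation glosses over; what the paper's approach buys is brevity and a formula-level explanation of \emph{why} the inverses converge (their derivatives do), which is all that is needed downstream since the application only requires termwise convergence of Taylor coefficients.
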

	\begin{proof}
		Let $y = f(x)$. By the inverse function theorem and limit properties, 
		\begin{align*}
		\left(f^{-1}\right)'(y) &= \frac{1}{f'(x)}	= \lim_{n \rightarrow \infty} \frac{1}{f_n'(x)}\\
		&= \lim_{n \rightarrow \infty} \left(f_n^{-1} \right)'(f_nf^{-1} y)\\
		&= \lim_{n \rightarrow \infty} \left(f_n^{-1} \right)'(\lim_{n \rightarrow \infty} f_nf^{-1} y)\\
		&= \lim_{n \rightarrow \infty} \left(f_n^{-1} \right)'(y),\\
		f^{-1}(y) - f^{-1}(0)&= \lim_{n \rightarrow \infty} f_n^{-1}(y) - f_n^{-1}(0),\\
		f^{-1}(y)	&= \lim_{n \rightarrow \infty} f_n^{-1}(y).
		\end{align*}
		See \cite{Convergence} for another proof.
	\end{proof}
	We can write $f(z) = z R(z) e^z$. The derivative is $f'(z) = R(z)e^z + z R'(z) e^z + z R(z) e^z$. Note that $R(0) \neq 0$, so $f'(0) = R(0)e^0 \neq 0$. So $f$ is biholomorphic on some open neighborhood $V$ of $0$. Similarly, $f_n(z) = z R(z) \left(1 + \frac{z}{n} \right)^n$ is biholomorphic on some open neighborhood $V_n$ of $0$. Let $x \in V$ and $y = f(x)$.	The sequence $\lim_{n \rightarrow \infty} \left(1 + \frac{x}{n} \right)^n$ converges locally uniformly to $e^x$. So the sequence 
	$$f_\tau(z) =  z (z - t_1)^{p_1} \ldots (z - t_m)^{p_m} \left(1 + \frac{z}{\tau}\right)^\tau$$
	converges locally uniformly to $f(z)$. So we have
	
	$$\lim_{n \rightarrow \infty} f^{-1}_n = f^{-1}.$$

	\section{Main theorem} \label{Main theorem section}
	
	\begin{theorem}
		Let $f(z) = z (z - t_1)^{p_1} \ldots (z - t_m)^{p_m} e^z$. Then the Taylor series for the inverse of $f$ around $0$ is 
		\begin{equation} 
		f^{-1}(z) = \sum_{n=1}^\infty \frac{(-n)^{n-1}}{n!} (-t_1)^{-n p_1} \ldots (-t_m)^{-n p_m} F_n z^n,
		\end{equation}
		where 
		$$F_n = {}^{1+1}F_{0+0} \left( \begin{array}{l l l}
		1 - n	&; n p_1, \ldots, n p_m	&; \\
		&;						&;
		\end{array} \frac{1}{n t_1}, \ldots, \frac{1}{n t_m} \right)$$
		is a generalized Kamp\'{e} de F\'{e}riet function.
	\end{theorem}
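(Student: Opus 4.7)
The plan is to chain together the three tools just developed. By the preceding lemma, $f_\tau^{-1} \to f^{-1}$ locally uniformly as $\tau \to \infty$ through positive integers, where $f_\tau(z) = z \prod_i (z - t_i)^{p_i}(1 + z/\tau)^\tau$. Locally uniform convergence of holomorphic functions implies convergence of Taylor coefficients, so the task reduces to computing $[z^n] f_\tau^{-1}$ in closed form and then passing to the limit.

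For the closed form, I would write $f_\tau(z) = z\, g_\tau(z)$ (noting $g_\tau(0) = \prod_i(-t_i)^{p_i} \neq 0$) and apply Lagrange inversion: $[z^n] f_\tau^{-1}(z) = \frac{1}{n}[z^{n-1}] g_\tau(z)^{-n}$. Expanding each $(z - t_i)^{-np_i} = (-t_i)^{-np_i}(1 - z/t_i)^{-np_i}$ together with $(1 + z/\tau)^{-n\tau}$ by the generalized binomial series, then extracting the coefficient of $z^{n-1}$ and factoring out $\prod_i(-t_i)^{-np_i}$, produces a finite $(m+1)$-fold sum constrained by $k_0 + k_1 + \cdots + k_m = n-1$, with Pochhammer factors $(n\tau)_{k_0}$, $(np_i)_{k_i}$ and weights $-1/\tau$, $1/t_i$.

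Multiplying and dividing by $(n-1)!$ restores a multinomial coefficient in front of the sum, putting it exactly into the hypothesis of the generalized Chu--Vandermonde identity with $r = m+1$, $k = n-1$, parameters $q_0 = n\tau$, $q_i = np_i$ and weights $w_0 = -1/\tau$, $w_i = 1/t_i$. Centering the identity on the index $0$ rewrites the sum as the prefactor $(-1/\tau)^{n-1}\bigl(n\tau + n\sum_i p_i\bigr)_{n-1}$ times an $F_D^{(m)}$ with first parameter $-(n-1)$ and arguments $1 + \tau/t_i$.

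It remains to send $\tau \to \infty$. Because the first parameter of $F_D^{(m)}$ is a negative integer, the Lauricella sum truncates to a polynomial in its arguments, so only finitely many multi-indices with $\ell_1 + \cdots + \ell_m \leq n-1$ contribute and the limit passes inside the sum. In each term, the ratio $\prod_i (1 + \tau/t_i)^{\ell_i}/(n\tau + n\sum_i p_i)_{\sum \ell_i}$ tends to $\prod_i (nt_i)^{-\ell_i}$, while the outer prefactor tends to $(-n)^{n-1}$; the factors $(1-n)_{\sum \ell_i}\prod_i(np_i)_{\ell_i}/\prod_i \ell_i!$ then reassemble into exactly $F_n$. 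Combining with the Lagrange factor $1/n$ and the $(n-1)!$ from the multinomial rewriting yields the asserted coefficient $\frac{(-n)^{n-1}}{n!}\prod_i(-t_i)^{-np_i}F_n$. The only step that genuinely needs care is the term-by-term limit inside $F_D^{(m)}$, and that is precisely what the polynomial truncation from the negative-integer first parameter makes routine; the rest is algebraic bookkeeping.
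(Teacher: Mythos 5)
Your proposal is correct and follows essentially the same route as the paper: approximate $e^z$ by $(1+z/\tau)^\tau$, apply Lagrange inversion to $f_\tau$, organize the resulting $(m+1)$-fold sum over $k_0+\cdots+k_m=n-1$ so that the generalized Chu--Vandermonde identity centered on the exponential index applies, and then let $\tau\to\infty$ term by term in the resulting (polynomial) $F_D^{(m)}$. The only cosmetic difference is that you use the coefficient-extraction form of Lagrange inversion with binomial series in place of the paper's derivative form with the generalized Leibniz rule, which is an equivalent bookkeeping choice.
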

	
	\begin{proof}
		The idea of the proof is to use the limit definition of $e^z$ to represent $f(z)$ as a product of powers of linear terms in $z$. This simplifies the application of the Lagrange inversion theorem, so that we can apply the generalized Chu-Vandermonde identity centered on the exponential term.
		
		Let $f_\tau(z) = z (z - t_1)^{p_1} \ldots (z - t_m)^{p_m} \left(1 + \frac{z}{\tau}\right)^\tau$, so that $\ds \lim_{\tau \rightarrow \infty} f_\tau(z) = f(z)$. Then we use the Lagrange inversion theorem \cite{Abramowitz1964Handbook} on $f_\tau(z)$.	
		\begin{align*}
		f_\tau^{-1}(z)	&=  \sum_{n = 1}^\infty \frac{z^n}{n!} \lim_{w \rightarrow 0} \frac{d^{n-1}}{dw^{n-1}}\left( \frac{w}{f(w)} \right)^n\\
		&= \sum_{n = 1}^\infty \frac{z^n}{n!} \lim_{w \rightarrow0} \frac{d^{n-1}}{dw^{n-1}}\left( \frac{\tau^\tau}{(w - t_1)^{p_1}(w - t_2)^{p_2} \ldots (w - t_m)^{p_m} (w + \tau)^{\tau}} \right)^n\\
		&= \sum_{n = 1}^\infty \frac{z^n}{n!} \tau^{\tau n} \lim_{w \rightarrow 0} \frac{d^{n-1}}{dw^{n-1}}\left( (w - t_1)^{-n p_1}\ldots (w - t_m)^{-n p_m} (w + \tau)^{-n \tau} \right).	
		\end{align*}
		To match the indices, let $p_0 = \tau$ and $t_0 = - \tau$. Using the generalized product rule, we can expand the derivative into a sum over vectors $\textbf{k} = (k_0, \ldots k_m)$ such that $k_0 + \ldots + k_m = n-1$. 
		
		\begin{align*}
		f_\tau^{-1}(z)	&= \sum_{n = 1}^\infty \frac{z^n}{n!} \tau^{\tau n} \lim_{w \rightarrow 0} \sum_{\sum_j k_j = n-1} \binom{n-1}{\textbf{k}} \prod_{i=0}^m (-n p_i)^{\underline{k_i}} (w - t_i)^{-n p_i - k_i}\\
		&= \sum_{n = 1}^\infty \frac{z^n}{n!} \prod_{i=0}^m (- t_i)^{-n p_i}\cdot \tau^{\tau n}  \sum_{\textbf{k}} \binom{n-1}{\textbf{k}}\prod_{i=0}^m (-n p_i)^{\underline{k_i}} (- t_i)^{-k_i}\\
		&= \sum_{n = 1}^\infty \frac{z^n}{n!} \prod_{i=1}^m (- t_i)^{-n p_i}\cdot \sum_{\textbf{k}} \binom{n-1}{\textbf{k}}\prod_{i=0}^m (-n p_i)^{\underline{k_i}} (- t_i)^{-k_i}\\	
		\end{align*}
		where $x^{\underline{a}}$ is the falling factorial $x (x-1) \ldots (x - a + 1)$. This can be converted to a rising factorial or Pochhammer symbol by the formula $x^{\underline{a}} = (-1)^a (-x)_a$. 
		\begin{align*}
		f_\tau^{-1}(z)	&= \sum_{n = 1}^\infty \frac{z^n}{n!} \prod_{i=1}^m (- t_i)^{-n p_i} \sum_{\textbf{k}} \binom{n-1}{\textbf{k}} \prod_{i=0}^m (-1)^{k_i}(n p_i)_{k_i} (- t_i)^{-k_i}\\
		&= \sum_{n = 1}^\infty \frac{z^n}{n!} \prod_{i=1}^m (- t_i)^{-n p_i} \sum_{\textbf{k}} \binom{n-1}{\textbf{k}}\prod_{i=0}^m (n p_i)_{k_i} \left(\frac{1}{t_i} \right)^{k_i}.
		\end{align*}
		Next we use the Chu-Vandermonde identity, centered on $0$. Note that $n p_0 = n \tau > 0$ and $\frac{-1}{t_0} = \frac{1}{\tau} > 0$, so the identity applies. Let $P = \sum_{i=1}^m p_i$. We have
		\begin{align*}
		&\sum_{\textbf{k}} \binom{n-1}{\textbf{k}} \prod_{i=1}^m (n p_i)_{k_i} \left(\frac{1}{t_i} \right)^{k_i} \\
		&= \left(\frac{1}{t_0} \right)^{n-1} \left( n P + n p_0\right)_{n-1} F_D^{(m)} \left(\begin{array}{l l}
		1-n;  n p_1,\ldots, n p_m\\
		nP + n p_0\end{array}; 1 -\frac{t_0}{t_1}, \ldots, 1 - \frac{t_0}{t_m}\right)\\
		&= \left(\frac{-1}{\tau} \right)^{n-1} \left( n P + n \tau \right)_{n-1} F_D^{(m)} \left(\begin{array}{l l}
		1-n;  n p_1,\ldots, n p_m\\
		nP + n \tau\end{array}; 1 + \frac{\tau}{t_1}, \ldots, 1 + \frac{\tau}{t_m}\right).
		\end{align*}
		Next we start taking the limit as $\tau \rightarrow \infty$. We have 
		$$\lim_{\tau \rightarrow \infty} \frac{(n P + n \tau)_{n-1}}{(-\tau)^{n-1}} = (-n)^{n-1}.$$
		Then we take the limit of the Lauricella function. We have 
		\begin{align*}
		F_D^{(m)} &= \sum_{k_1, \ldots, k_m =0}^\infty \frac{(1-n)_{k_1 + \ldots + k_m} (np_1)_{k_1} \ldots (np_m)_{k_m}}{(n P + n \tau)_{k_1 + \ldots + k_m}} \prod_{i=1}^m \frac{1}{k_i!}\left(1 + \frac{\tau}{t_i} \right)^{k_i}.
		\end{align*}
		Note that the highest power of $\tau$ in the numerator and denominator is $\tau^{k_1 + \ldots + k_m}$, so we have the limit
		\begin{align*}
		\lim_{\tau \rightarrow \infty}\frac{\prod_{i=1}^m \left(1 + \frac{\tau}{t_i} \right)^{k_i}}{(n P + n \tau)_{k_1 + \ldots + k_m}} &= \frac{1}{(n t_1)^{k_1} \ldots (n t_m)^{k_m}}.
		\end{align*}
		So 
		\begin{align*}
		\lim_{\tau \rightarrow \infty} F_D^{(m)} &= \sum_{k_1, \ldots, k_m =0}^\infty \frac{(1-n)_{k_1 + \ldots + k_m} (np_1)_{k_1} \ldots (np_m)_{k_m}}{k_1! \ldots k_m! (nt_1)^{k_1} \ldots (nt_m)^{k_m}}\\
		&= {}^{1+1}F_{0+0} \left( \begin{array}{l l l}
		1 - n	&; n p_1, \ldots, n p_m	&; \\
		&;						&;
		\end{array} \frac{1}{n t_1}, \ldots, \frac{1}{n t_m} \right).
		\end{align*}
		So we have
		\begin{align*}
		f^{-1}(z)	&= \sum_{n = 1}^\infty \frac{(-n)^{n-1}}{n!} \prod_{i=1}^m (- t_i)^{-n p_i} F_n z^n.
		\end{align*}
		
	\end{proof}
		
\section{Radius of convergence} \label{ROC section}


By applying the root test for convergence, the radius of convergence for the series in \cref{series} is given by 
\begin{align*}
\frac{1}{R} &= \limsup_{n \rightarrow \infty} \left| \frac{(-n)^{n-1}}{n!} (-t_1)^{-n p_1} \ldots (-t_m)^{-n p_m} F_n \right|^{1/n}\\
&= e |t_1|^{-p_1} \ldots |t_m|^{- p_m} \limsup_{n \rightarrow \infty} \left|  F_n \right|^{1/n}.
\end{align*}
So we are looking for 
$$\limsup_{n \rightarrow \infty} \left|\sum_{k_1, \ldots, k_m =0}^\infty \frac{(1-n)_{k_1 + \ldots + k_m} (np_1)_{k_1} \ldots (np_m)_{k_m}}{k_1! \ldots k_m! (nt_1)^{k_1} \ldots (nt_m)^{k_m}} \right|^{1/n}.$$
Denote the coefficients in the sum by
$$a_n(\textbf{k}) = \frac{(1-n)_{k_1 + \ldots + k_m} (np_1)_{k_1} \ldots (np_m)_{k_m}}{k_1! \ldots k_m! (nt_1)^{k_1} \ldots (nt_m)^{k_m}},$$
then let $\lambda = (\lambda_1, \ldots, \lambda_m)$, where $\lambda_i = \frac{k_i}{n}$. The asymptotic behaviour of $a_n(\lambda n)$ for large $n$ allows us to examine the asymptotic behaviour of $F_n$. Note that when $k_1 + \ldots + k_m \geq n-1$ we have $a_n(\textbf{k}) = 0$. So 
$$\limsup_{n \rightarrow \infty} F_n = \limsup_{n \rightarrow \infty} n \int_{\Delta} a_n(\lambda n) \, d\lambda,$$
where $\Delta$ is the simplex 
$$\Delta = \left\{\lambda = (\lambda_1, \ldots, \lambda_m): \lambda_i \geq 0 \text{ and } \sum_j \lambda_j \leq 1 \right\}.$$

\begin{lem}
	For any $p \in \mathbb{C}$, positive integers $n$ and $\lambda \in [0, 1]$, we have
	\begin{align*} 
	\frac{(n p)_{n \lambda}}{(n \lambda)!} \sim \sqrt{\frac{p}{2 \pi n \lambda (\lambda+p)}} \left( \frac{(p + \lambda)^{p + \lambda}}{p^p \lambda^\lambda} \right)^n \psi(n, \lambda, p), 
	\end{align*}
	where $\psi(n, \lambda p) = 2 i \sin (n p \pi) e^{i \pi p n}$ if $p < 0 < p + \lambda$ and $1$ otherwise.
	
	\begin{proof}
		First we expand the Pochhammer symbols into Gamma functions. We have 
		$$(n p)_k = \left\{ \begin{array}{l l}
		\ds \frac{\Gamma(n p + k)}{\Gamma(n p)}	& \text{ if $n p  > 0$},\\
		\ds \Gamma(n p + k)\Gamma(1 - n p) \frac{\sin(n p \pi)}{\pi}	& \text{ if $n p < 0 < n p + k$,}\\
		\ds \frac{(-1)^k \Gamma (1- n p)}{\Gamma (1-k-n p)}	& \text{ if $n p + k < 0$}.
		\end{array}\right.$$
		We use Stirling's approximation $\Gamma(z) \sim \sqrt{\frac{2\pi}{z}} \left( \frac{z}{e} \right)^{z}$ for large $|z|$ and $\arg z < \pi$.
		$\Gamma(z+1) \sim \sqrt{2\pi z} \left( \frac{z}{e} \right)^{z}$ for large $|z|$ and $\arg z < \pi$.
		If $\arg p \neq \pi$, then 
		\begin{align*}
		\frac{(n p)_{n \lambda}}{(n \lambda)!}	&\sim \frac{\Gamma(n p + n \lambda)}{\Gamma(n p) \Gamma(n \lambda + 1)}\\
		&= \frac{\sqrt{\frac{2\pi}{n (p + \lambda)}} \left( \frac{n (p + \lambda)}{e} \right)^{n (p + \lambda)}}{\sqrt{\frac{2\pi}{n p}} \left( \frac{n p}{e} \right)^{n p} \sqrt{2\pi n \lambda} \left( \frac{n \lambda}{e} \right)^{n \lambda}}\\
		&= \sqrt{\frac{p}{2\pi n \lambda (p + \lambda)}}\left( \frac{(p + \lambda)^{p + \lambda}}{p^p \lambda^\lambda} \right)^n.
		\end{align*}
		If $p < 0 < p + \lambda$, then 
		\begin{align*}
		\frac{(n p)_{n \lambda}}{(n \lambda)!}	&\sim \frac{\Gamma(n p + n \lambda)\Gamma(1 - n p)}{\Gamma(n \lambda + 1)} \frac{\sin(n p \pi)}{\pi}\\
		&= \frac{\sqrt{\frac{2\pi}{n (p + \lambda)}} \left( \frac{n (p + \lambda)}{e} \right)^{n (p + \lambda)}\sqrt{-2\pi n p} \left( \frac{- n p}{e} \right)^{-n p}}{\sqrt{2\pi n \lambda} \left( \frac{n \lambda}{e} \right)^{n \lambda}} \frac{\sin(n p \pi)}{\pi}\\
		&= 2 i \sin(n p \pi) e^{i \pi p n} \sqrt{\frac{p}{2 \pi n \lambda (p + \lambda)}} \left( \frac{(p + \lambda)^{p + \lambda} }{p^p \lambda^\lambda} \right)^{n}.
		\end{align*}
		If $p + \lambda < 0$, then 
		\begin{align*}
		\frac{(n p)_{n \lambda}}{(n \lambda)!}	&\sim \frac{(-1)^{n \lambda} \Gamma (1- n p)}{\Gamma (1-n (p + \lambda)) \Gamma(n \lambda + 1)}\\
		&= \frac{(-1)^{n \lambda} \sqrt{- 2\pi n p} \left( \frac{- n p}{e} \right)^{-n p}}{\sqrt{- 2\pi n (p + \lambda)} \left( \frac{-n (p+\lambda)}{e} \right)^{-n (p + \lambda)} \sqrt{2\pi n \lambda} \left( \frac{n \lambda}{e} \right)^{n \lambda}}\\
		&= (-1)^{n \lambda} \sqrt{\frac{p}{2\pi n \lambda(p + \lambda)}} \left( \frac{ (-p-\lambda)^{p + \lambda}}{(-p)^{p}\lambda^{\lambda}} \right)^n\\
		&= \sqrt{\frac{p}{2\pi n \lambda(p + \lambda)}} \left( \frac{(p + \lambda)^{p + \lambda}}{p^p \lambda^\lambda} \right)^n.
		\end{align*}	
		Combining these, we have 		
		$$\frac{(n p)_{n \lambda}}{(n \lambda)!} \sim \psi(n, \lambda, p) \sqrt{\frac{p}{2\pi n \lambda(p + \lambda)}} \left( \frac{(p + \lambda)^{p + \lambda}}{p^p \lambda^\lambda} \right)^n,$$
		where
		$$\psi(n, \lambda, p) = \left\{ \begin{array}{l l}
		2 i \sin (n p \pi) e^{i \pi p n}	&	-\lambda < p < 0,\\
		1		&	\text{otherwise}.
		\end{array}\right.$$
	\end{proof}
\end{lem}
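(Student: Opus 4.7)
The plan is to recognize $\frac{(np)_{n\lambda}}{(n\lambda)!}$ as a ratio of Gamma functions and apply Stirling's approximation termwise, with the case analysis in the statement arising from the need to use Euler's reflection formula whenever the rising factorial $(np)_{n\lambda}$ straddles or lies below zero. First I would handle the ``generic'' case where $\arg p \neq \pi$ so that no crossings occur: then $(np)_{n\lambda} = \Gamma(np+n\lambda)/\Gamma(np)$ directly, and writing $(n\lambda)! = \Gamma(n\lambda+1)$ lets me apply Stirling to each of the three Gamma functions. The $e$-exponents cancel because $(p+\lambda) - p - \lambda = 0$, the $n$-exponents cancel for the same reason, the square-root prefactors combine to $\sqrt{p/(2\pi n\lambda(p+\lambda))}$, and the remaining factors assemble into $((p+\lambda)^{p+\lambda}/(p^p\lambda^\lambda))^n$.

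Next I would handle the case $p < 0 < p+\lambda$. Here $(np)$ is negative while $(np+n\lambda)$ is positive, so I replace $1/\Gamma(np)$ via $\Gamma(np)\Gamma(1-np) = \pi/\sin(np\pi)$ and apply Stirling to $\Gamma(1-np) \sim \sqrt{2\pi(-np)}(-np/e)^{-np}$ instead (since $-np > 0$ lies in the right half plane). The resulting product has $(-p)^{-p}$ where the generic case had $p^p$; using the principal branch, $p^p = e^{p(\log|p|+i\pi)} = (-p)^{p} e^{i\pi p}$, so the exponent factor picks up $e^{i\pi p n}$. Meanwhile $\sqrt{-2\pi np} = i\sqrt{2\pi np}$ together with the other square roots contributes a $2\pi i$ that, multiplied by the $\sin(np\pi)/\pi$ inherited from the reflection formula, yields $2i\sin(np\pi)$. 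The product is $\psi = 2i\sin(np\pi)e^{i\pi pn}$ as claimed.

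Finally, for $p+\lambda < 0$, both $\Gamma(np)$ and $\Gamma(np+n\lambda)$ have negative arguments, and I would apply the reflection identity to both, obtaining the formula $(np)_{n\lambda} = (-1)^{n\lambda}\Gamma(1-np)/\Gamma(1-np-n\lambda)$. Applying Stirling to the now-positive arguments, the power-of-$n$ factors collapse to $(-p-\lambda)^{p+\lambda}/((-p)^p \lambda^\lambda)$; converting each of $(-p)^{-p}$ and $(-p-\lambda)^{-(p+\lambda)}$ back to $p^p$ and $(p+\lambda)^{p+\lambda}$ via the principal branch introduces phases $e^{-i\pi p n}$ and $e^{i\pi(p+\lambda)n}$, whose product with $(-1)^{n\lambda} = e^{i\pi n\lambda}$ is $1$. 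Thus in this case $\psi = 1$, matching the ``otherwise'' branch.

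The main obstacle is purely bookkeeping: systematically tracking the complex phases produced by interpreting $x^x$ at negative real $x$ on the principal branch, and verifying that the $i$'s arising from square roots of negative quantities combine consistently with the $\sin$ factors from reflection. A secondary point worth checking is that $n\lambda$ is an integer (which will be true in the intended application, where $\lambda_i = k_i/n$), so that $(-1)^{n\lambda}$ is unambiguous and $\Gamma(n\lambda+1) = (n\lambda)!$ literally.
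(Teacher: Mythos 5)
Your proposal follows essentially the same route as the paper: the same three-way case split on the sign of $np$ and $np + n\lambda$, the same use of the reflection formula to convert negative-argument Gamma functions, Stirling applied termwise, and the same phase bookkeeping ($p^p = (-p)^p e^{i\pi p}$, the $2\pi i$ from $\sqrt{-2\pi np}$ combining with $\sin(np\pi)/\pi$, and the cancellation of phases in the $p+\lambda<0$ case). The argument is correct and matches the paper's proof in all essentials.
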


\begin{lem}
	For integers $n > 0$ and $k = n \lambda$ with $0< \lambda < 1$, 
	\begin{align*}
	(1-n)_k	&= \frac{(-1)^{n \lambda} \Gamma(n)}{\Gamma(n(1 - \lambda))}\\
	&\sim \frac{(-1)^{n \lambda} \sqrt{\frac{2 \pi }{n}} \left(\frac{n}{e}\right)^n}{\sqrt{\frac{2 \pi }{(1-\lambda ) n}} \left(\frac{(1-\lambda ) n}{e}\right)^{(1-\lambda ) n}}\\
	&=(-1)^{n \lambda} \sqrt{1 - \lambda} \left(\frac{n^{\lambda}}{\left(1-\lambda \right)^{1-\lambda} e^{\lambda}} \right)^n.
	\end{align*}
\end{lem}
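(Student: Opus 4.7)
The plan is to convert the Pochhammer symbol to a ratio of gamma functions, then apply Stirling's approximation and simplify. The result is essentially a direct calculation, so the proposal is really just to organise the bookkeeping.

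First, I would use the product definition $(1-n)_k = \prod_{j=0}^{k-1}(1 - n + j)$. Since $k = n\lambda < n$, every factor in this product is a negative integer, so I can pull out $(-1)^k = (-1)^{n\lambda}$ and obtain $\prod_{j=0}^{k-1}(n-1-j) = (n-1)(n-2)\cdots(n-k)$. This telescopes into $(n-1)!/(n-k-1)! = \Gamma(n)/\Gamma(n-k)$, and substituting $k = n\lambda$ yields the first equality
\[
(1-n)_k = \frac{(-1)^{n\lambda}\,\Gamma(n)}{\Gamma(n(1-\lambda))}.
\]

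Next I would apply Stirling in the form $\Gamma(z) \sim \sqrt{2\pi/z}\,(z/e)^z$ (as used in the preceding lemma) to both gamma factors. Since $0 < \lambda < 1$, both $n$ and $n(1-\lambda)$ tend to infinity along the positive real axis, so the asymptotic applies cleanly and no branch issues arise (in contrast with the previous lemma). This directly produces the displayed intermediate expression with the square roots and the $(n/e)^n / (n(1-\lambda)/e)^{n(1-\lambda)}$ factor.

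The final step is algebraic simplification. The square-root prefactors combine to $\sqrt{1-\lambda}$. For the exponential parts, I would separate powers of $n$, powers of $e$, and powers of $(1-\lambda)$: the $n$-powers give $n^n / n^{n(1-\lambda)} = n^{n\lambda}$, the $e$-powers give $e^{n(1-\lambda) - n} = e^{-n\lambda}$, and the $(1-\lambda)$-power is $(1-\lambda)^{-n(1-\lambda)}$. Collecting these into a single $n$-th power yields $\bigl(n^\lambda / ((1-\lambda)^{1-\lambda} e^\lambda)\bigr)^n$, matching the claim. There is no real obstacle here; the only thing to be careful about is keeping track of which exponents carry the factor of $n$ so that the final form is expressed as a clean $n$-th power, which is what matters for feeding this estimate into the root-test computation in Section \ref{ROC section}.
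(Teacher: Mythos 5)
Your proposal is correct and follows essentially the same route as the paper: rewrite the Pochhammer symbol as $(-1)^{n\lambda}\Gamma(n)/\Gamma(n(1-\lambda))$, apply Stirling's approximation $\Gamma(z)\sim\sqrt{2\pi/z}\,(z/e)^z$ to both factors, and simplify the powers of $n$, $e$, and $(1-\lambda)$ into a single $n$-th power. The paper presents exactly this computation inline in the lemma statement, so there is nothing to add.
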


	Let $\Sigma_\lambda = \sum_i \lambda_i$. Let $\psi_i = \psi(n, \lambda_i, p_i)$. The $t_i$ can be complex, so let $t_i = |t_i| e^{i \theta_i}$. We have
	\begin{align*}
	a_n(\textbf{k})			&= (1-n)_{k_1 + \ldots + k_m}  \prod_{i=1}^r \frac{(np_i)_{k_i}}{k_i! (n t_i)^{k_i}},\\
	a_n(n \lambda)	&= \frac{(-1)^{n \Sigma_\lambda} \Gamma(n)}{\Gamma(n(1 - \Sigma_\lambda))}  \prod_{i=1}^r \frac{(np_i)_{n \lambda_i}}{(n \lambda_i)! (n t_i)^{n \lambda_i}}\\
	&\sim (-1)^{n \Sigma_\lambda} \sqrt{1 - \Sigma_\lambda} \left(\frac{n^{\Sigma_\lambda}}{\left(1-\Sigma_\lambda \right)^{1-\Sigma_\lambda} e^{\Sigma_\lambda}} \right)^n \prod_{i=1}^r \psi_i \sqrt{\frac{p_i}{2\pi n \lambda_i (p_i + \lambda_i)}} \left(\frac{(p_i + \lambda_i)^{p_i + \lambda_i}}{p_i^{p_i} (n t_i \lambda_i)^{\lambda_i}} \right)^n\\
	&= \prod_{j=1}^n \psi_j  \left( \sqrt{1 - \Sigma_\lambda} \prod_{j=1}^n \sqrt{\frac{p_i}{2\pi n \lambda_i (p_i + \lambda_i)}} \right) \left(\frac{1}{\left(1-\Sigma_\lambda \right)^{1-\Sigma_\lambda}} \ds \prod_{j=1}^r  \frac{(p_i + \lambda_i)^{p_i + \lambda_i}}{p_i^{p_i} (-e t_i \lambda_i)^{\lambda_i}} \right)^n\\
	&= \prod_{j=1}^n \frac{\psi_j}{\sqrt{n}}  \cdot f(\lambda) e^{n g(\lambda)}, 
	\end{align*}	
	where 
	$$f(\lambda) = \sqrt{1 - \Sigma_\lambda} \prod_{j=1}^n \sqrt{\frac{p_i}{2\pi \lambda_i (p_i + \lambda_i)}}$$
	and
	$$g(\lambda) = \log \frac{1}{\left(1-\Sigma_\lambda \right)^{1-\Sigma_\lambda}} \ds \prod_{j=1}^r  \frac{(p_i + \lambda_i)^{p_i + \lambda_i}}{p_i^{p_i} (-e t_i \lambda_i)^{\lambda_i}}.$$
So for large $n$, the sum becomes a multiple integral over the simplex:
$$\sum_{\textbf{k}} a_n(\textbf{k}) \sim n^{1 - \frac{m}{2}} \prod_{j=1}^n \psi_j \cdot \int_\Delta  f(\lambda) e^{n g(\lambda)} d\lambda.$$
If $p_j \in \mathbb{Q} \cap [-\lambda_j, 0]$ then $\psi_j = 0$ whenever $n p_j$ is an integer. We can choose arbitrarily large $n$ such that $n p_j$ is not an integer. So we have 
\begin{align*}
\limsup_{n \rightarrow \infty} \left|\sum_{\textbf{k}} a_n(\textbf{k}) \right|^{1/n} &= \limsup_{n \rightarrow \infty} \left|n^{1 - \frac{m}{2}} \prod_{j=1}^n \psi_j \cdot \int_\Delta  f(\lambda) e^{n g(\lambda)} d\lambda \right|^{1/n}\\
&= \limsup_{n \rightarrow \infty} \left|\int_\Delta f(\lambda) e^{n g(\lambda)} d\lambda \right|^{1/n}\\
\end{align*}

\subsection{Saddle point method}

To evaluate this limit we can use the \enquote{saddle point method}, following \cite{fedoryuk1987asymptotic, Wong2001Asymptotic}. We deform the simplex $\Delta$ into a \enquote{steepest descent} set $\Omega$ with the same boundary as $\Delta$ and containing one or more zeros $\phi_i$ of $g'(\lambda)$. The set $\Omega$ can be chosen in such a way that the imaginary part of $g(z)$ on $\Omega$ is constant. Then we have
\begin{align*}
\int_\Delta f(x) e^{n g(x)} dx &= \int_\Omega f(x) e^{n g(x)} dx\\
&\sim \left(\frac{2\pi}{n} \right)^{\frac{m}{2}} \sum_k e^{n g(\phi_k)} \det\left(- S''(\phi_k)\right)^{-\frac{1}{2}} f(\phi_k).
\end{align*}
Typically, one expects that of the $\phi_i$ will dominate the others, and the limit becomes
\begin{align*}
\limsup_{n \rightarrow \infty} \left| \int_0^1 f(x) e^{n g(x)} dx \right|^{1/n} &= e^{\Re (g(\phi_k))},
\end{align*}
for some $\phi_k$. It remains to find the saddle points of $g$. We have:
\begin{align} \label{derivative of g}
\frac{\partial g}{\partial \lambda_i} &= e^{g(\lambda)} \left( \log(1 - \sum \lambda_j) + \log(p_i + \lambda_i) - \log(- t_i \lambda_i) \right) = 0.
\end{align}
If the derivatives are zero, then
\begin{equation} \label{polynomial system}
(1 - \sum \lambda_j)(p_i + \lambda_i) + t_i \lambda_i = 0, 
\end{equation} 
for all $i$. However, if we substitute the solutions to \cref{polynomial system} into \cref{derivative of g} we may get an integer multiple of $2\pi i$ instead of $0$, due to the branch cuts of the complex logarithm. For example, if $m = 1, p = -1 + i$, and $t = -1 + i$ then the solutions are $-i$ and $1 + i$. But $-i$ is not a root of \cref{derivative of g}. From numerical experiments using Mathematica, it seems that the equation 
$$\limsup_{n \rightarrow \infty} \left| F_n \right|^{1/n} = e^{\Re (g(\phi))}$$
for some solution $\phi$, holds whenever both solutions to \cref{polynomial system} are also solutions to \cref{derivative of g}. Otherwise, it may not hold for any of the solutions. Note that adding a integer (say $l_i$) multiple of $2\pi i \lambda_i$ to $g$ makes no difference to the sum $F_n$, because $\lambda_i n$ is always an integer. That is, 
$$e^{ng(\lambda)} = e^{n g(\lambda) + 2\pi i \lambda_i n l_i }.$$
However this addition does make a difference to the integral, because the integral doesn't assume $n \lambda$ is an integer. If we include this extra term, then to find the saddle point we need to solve 
\begin{equation} \label{extra complex bit}
(1 - \sum \lambda_j)(p_i + \lambda_i) + t_i \lambda_i = - 2\pi i l_i. 
\end{equation} 
The solutions to this must also be solutions to \cref{polynomial system}, but this may recover missing solutions. We suspect that the $l_i$ can only take the values $-1, 0, 1$. Now we get 
\begin{equation} \label{Integral limit}
\limsup_{n \rightarrow \infty} \left| \int_{\Delta} f(\lambda) e^{n g(\lambda) + 2\pi i n \sum_j\lambda_j l_j} d \lambda \right|^{1/n} = e^{g(\phi) + \sum_j 2\pi i l_j \phi_j},
\end{equation} 
where $\phi$ is a solution to \cref{polynomial system}. Experiments with Mathematica suggest that this holds for many values of $p$ and $t$, including complex values. If \cref{Integral limit} is true, then \cref{Radius conjecture} follows by a simple calculation. A full proof of the conjecture may be difficult, due to the following complications:
\begin{enumerate}
	\item For some values of $p$ and $t$, the saddle points may be \enquote{degenerate}, i.e. $\det g''(\lambda) = 0$. These special cases can be handled with catastrophe theory. 
	\item The functions $f$ and $g$ have several discontinuities and branch cuts that require special care.
	\item The extra integers $l_i$ are ad hoc, and it's not clear why they should work.
\end{enumerate}

\bibliographystyle{amsplain}
\bibliography{Lambert}

\end{document}